\newtheorem{thm}{Theorem}[section]
\newtheorem{lem}[thm]{Lemma}
\newtheorem{cor}[thm]{Corollary}
\newtheorem{rema}[]{Remark}
\title{The smallest sets of points not determined by their X-rays\footnote{NOTICE: this is the authors version of a work that was accepted for publication in the \textbf{Bulletin of the London Mathematical Society}. Changes resulting from the publishing process, such as peer review, editing, corrections, structural formatting, and other quality control mechanisms may not be reflected in this document. Changes may have been made to this work since it was submitted for publication. A definitive version will subsequently appear in the Bulletin of the London Mathematical Society.}}
\author{Andreas Alpers\\
{\small Zentrum Mathematik, Technische Universit\"at M\"unchen} \\ {\small D-85747 Garching bei M\"unchen, Germany}\\
{\small \texttt{alpers@ma.tum.de}}\\
\and
David G. Larman\\
{\small Department of Mathematics, University College London}\\
{\small Gower Street, London WC1E 6BT, United Kingdom}\\
{\small \texttt{dgl@math.ucl.ac.uk}}\\
}
\date{}
\begin{document}
\maketitle

\begin{abstract}
Let $F$ be an $n$-point set in $\mathbb{K}^d$ with $\mathbb{K}\in\{\mathbb{R},\mathbb{Z}\}$ and $d\geq 2$. A (discrete) X-ray of $F$ in direction $s$ gives the number of points of $F$ on each line parallel to $s$. We define $\psi_{\mathbb{K}^d}(m)$ as the minimum number $n$ for which there exist $m$ directions $s_1,\dots,s_m$ (pairwise linearly independent and spanning $\mathbb{R}^d$) such that two $n$-point sets in $\mathbb{K}^d$ exist that have the same X-rays in these directions. The bound $\psi_{\mathbb{Z}^d}(m)\leq 2^{m-1}$ has been observed many times in the literature. In this note we show $\psi_{\mathbb{K}^d}(m)=O(m^{d+1+\varepsilon})$ for $\varepsilon>0$. For the cases $\mathbb{K}^d=\mathbb{Z}^d$ and $\mathbb{K}^d=\mathbb{R}^d$, $d>2$, this represents the first upper bound on $\psi_{\mathbb{K}^d}(m)$ that is polynomial in~$m$. As a corollary we derive bounds on the sizes of solutions to both the classical and two-dimensional Prouhet-Tarry-Escott problem. Additionally, we establish lower bounds on $\psi_{\mathbb{K}^d}$ that enable us to prove a strengthened version of R{\'e}nyi's theorem for points in $\mathbb{Z}^2$.
\end{abstract}

\section{Introduction}
The problem of reconstructing point sets from their X-rays has a long history; perhaps the 1952 paper~\cite{renyi52} by R{\'e}nyi represents one of the first works in this field. Of special interest are questions of uniqueness. Two sets with the same X-rays are said to be \emph{tomographically equivalent} \cite{gritzmannlangfeld}, \cite{hajdutijdeman01}; the sets are also commonly referred to as \emph{switching components} \cite{kongherman98}, \cite{shilferstein} or \emph{ghosts} \cite[Sect.~15.4]{herman09}. In \cite{matousek08} Matou{\v{s}}ek, P{\v{r}}{\'{\i}}v{\v{e}}tiv{\'y}, and {\v{S}}kovro{\v{n}} show that almost all sets of $m$~directions (in the sense of measure) allow for a unique reconstruction of $2^{Cm/\log(m)}$-point sets in the real plane (here $C>0$ is a constant and the result holds for large~$m$). For almost all choices of $m$ directions there thus exist only superpolynomial size switching components. By a careful selection of directions, however, we can reduce them to a polynomial size.

To make this precise, let $F$ be an $n$-point set in $\mathbb{K}^d$ with $\mathbb{K}\in\{\mathbb{R},\mathbb{Z}\}$ and $d\geq 2$. A (discrete) X-ray of~$F$ in direction $s$ gives the number of points of $F$ on each line parallel to $s$. We define $\psi_{\mathbb{K}^d}(m)$ as the minimum number $n$ for which there exist $m$ directions $s_1,\dots,s_m$ (pairwise linearly independent and spanning $\mathbb{R}^d$) such that two different $n$-point sets in $\mathbb{K}^d$ exist that have the same X-rays in these directions. We derive lower and upper bounds on $\psi_{\mathbb{K}^d}$.

Two constructions are known to yield upper bounds on $\psi_{\mathbb{K}^d}$. 
The first construction is based on regular polygons. The two disjoint $m$-point sets of alternate vertices of a regular $2m$-gon in $\mathbb{R}^2$ yield $\psi_{\mathbb{R}^2}(m)\leq m$. This cannot be transfered to $\mathbb{Z}^d$ as any (planar) regular polygon with integer vertices must have $3$, $4$ or $6$ vertices~\cite{scherrer,chrestenson63}. The functions $\psi_{\mathbb{R}^2}$ and $\psi_{\mathbb{Z}^2}$ are, in fact, different functions as we show $\psi_{\mathbb{Z}^2}(m)\geq m+1$ if $m=5$ or $m>6$ (see Thm.~\ref{thm:lowerbound}). From this we derive a strengthened version of R{\'e}nyi's theorem (see Thm.~\ref{thm:thm1} and Cor.~\ref{cor:cor1} in Sect.~\ref{sect:lowerbounds}).

The second well-known construction for upper bounds on $\psi_{\mathbb{K}^d}$ is based on two-colorings of the unit cube $[0,1]^m$ in $\mathbb{Z}^m$. More precisely, two different sets with equal X-rays in coordinate directions are obtained as the two disjoint sets of $2^{m-1}$ alternate vertices of  $[0,1]^m$. By projecting into~$\mathbb{Z}^d$, the bound $\psi_{\mathbb{Z}^d}(m)\leq 2^{m-1}$ is obtained. This construction seems to be due to Lorentz~\cite{lorentz49}; see also \cite{bianchilonginetti}, \cite[Lem.~2.3.2]{gardner}, and \cite[Thm.~4.3.1]{gardnergritzmannschapter4}. As $\mathbb{Z}^d\subseteq \mathbb{R}^d$, this, of course, yields also $\psi_{\mathbb{R}^d}(m)\leq 2^{m-1}$.
   
Our main observation is contained in the statement of Thm.~\ref{thm:maintheorem}, where we prove $\psi_{\mathbb{Z}^d}(m)=O(m^{d+1+\varepsilon})$ for $\varepsilon>0$. This is, to our knowledge, the first upper bound on $\psi_{\mathbb{K}^d}(m)$ that is polynomial in~$m$. Our proof is non-constructive.

We conclude in Sect.~\ref{sect:consequences} by stating some remarks and consequences that relate our bounds to the \emph{Prouhet-Tarry-Escott} problem from number theory (see, e.g., \cite[Sect.~21.9]{hardywright08}).

Throughout the paper, $\zeta$ is the Riemann zeta function, $m$ and $n$ denote natural numbers, and $\mathbb{Z}$, $\mathbb{R}$, $\mathbb{N}=\{1,2,\dots\}$ are, respectively, the sets of integers, reals, and natural numbers. We use the notation $\mathbb{N}_0=\mathbb{N}\cup\{0\}$, $[n]=\{1,\dots,n\}$, and $2\mathbb{Z}=\{2z\::\:z\in \mathbb{Z} \}$. With $G_n^d=[n]^d$ we denote the set of $d$-tuples of positive integers less than or equal to $n$. If $\xi\in\mathbb{R}$, then $\lceil \xi \rceil$ denotes the smallest integer greater than or equal to $\xi$. The symbol $O$ has the usual meaning: $f(m)=O(g(m))$ means that $f(m)/g(m)$ is bounded as $m \to \infty$. A property is said to hold for large $m$ if that property holds for all $m$ larger than some $m_0$.

\section{Lower bounds}\label{sect:lowerbounds}
In this section we derive lower bounds on $\psi_{\mathbb{K}^d}$. The key ideas are not new, but appear scattered and isolated in different contexts in the literature (see \cite{renyi52} and the proof of Thm.~\ref{thm:lowerbound} in \cite{alpers-tijdeman-07}). 

\begin{thm}\label{thm:thm1}
For every $d\geq2$ we have $\psi_{\mathbb{K}^d}(m)\geq m$.
\end{thm}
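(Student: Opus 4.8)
The plan is to pass from two tomographically equivalent $n$-point sets to a pair of disjoint \emph{switching components} and then to exhibit, inside one of them, $m$ distinct points produced by the $m$ directions.

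First I would let $F\neq F'$ be $n$-point sets in $\mathbb{K}^d$ with equal X-rays in the pairwise independent directions $s_1,\dots,s_m$, and set $A=F\setminus F'$ and $B=F'\setminus F$. Since $|F|=|F'|=n$ and $F\neq F'$, both $A$ and $B$ are nonempty, and they are disjoint. The common part $F\cap F'$ contributes equally to the X-rays of $F$ and of $F'$, so subtracting its contribution on each line shows $|A\cap \ell|=|B\cap \ell|$ for every line $\ell$ parallel to any $s_i$; that is, $A$ and $B$ are disjoint nonempty sets with identical X-rays in all $m$ directions. It then suffices to prove $|B|\geq m$, because $B\subseteq F'$ gives $n=|F'|\geq |B|$.

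The heart of the argument is an incidence count. Fix any point $p\in A$ and, for each $i\in[m]$, let $\ell_i$ be the line through $p$ parallel to $s_i$. Since $p\in A\cap \ell_i$ we have $|A\cap \ell_i|\geq 1$, whence $|B\cap \ell_i|\geq 1$, and I may choose $q_i\in B\cap \ell_i$. I would then show the $q_i$ are pairwise distinct: for $i\neq j$ the lines $\ell_i$ and $\ell_j$ are distinct (their directions $s_i,s_j$ are linearly independent) and both pass through $p$, so $\ell_i\cap \ell_j=\{p\}$; as $q_i\in B$ and $p\in A$ with $A\cap B=\emptyset$, we have $q_i\neq p$, and therefore $q_i$ cannot lie on $\ell_j$, forcing $q_i\neq q_j$. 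Hence $\{q_1,\dots,q_m\}$ consists of $m$ distinct points of $B$, so $|B|\geq m$ and $n\geq m$.

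The step I expect to carry the real content is the distinctness of the $q_i$: it is exactly here that pairwise linear independence of the directions is used, and it is what yields the sharp constant. A purely algebraic route --- encoding $A$ and $B$ by a generating function and using that $\prod_i(1-\mathbf{x}^{s_i})$ divides $P_A-P_B$, then bounding the number of monomials via Descartes' rule at $t=1$ --- only gives $|B|\gtrsim m/2$, since each factor contributes a simple zero there; the geometric incidence argument is what upgrades this to the tight bound $|B|\geq m$. I would also note that the argument uses neither the spanning hypothesis nor the distinction between $\mathbb{K}=\mathbb{Z}$ and $\mathbb{K}=\mathbb{R}$, so it applies uniformly to both $\psi_{\mathbb{R}^d}$ and $\psi_{\mathbb{Z}^d}$ for all $d\geq 2$.
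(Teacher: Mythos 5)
Your proof is correct and is essentially the paper's own argument: both hinge on fixing a point $p\in F\setminus F'$ and observing that the $m$ lines through $p$ in the given directions must each contain a point of the other set, and that these points are distinct since the lines meet only at $p$. Your reduction to the disjoint sets $A=F\setminus F'$ and $B=F'\setminus F$ is a slightly more explicit packaging of the same incidence count, and the closing remarks about an algebraic alternative are extraneous to the proof.
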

\begin{proof}
This is a reformulation of R{\'e}nyi's theorem (proved in \cite{renyi52} and generalized to arbitrary dimensions by Heppes~\cite{heppes}), which states that any $n$-point set in $\mathbb{K}^d$ is uniquely determined by its X-rays from $n+1$ different directions. 
For completeness, we reproduce a short proof. Suppose there are two sets $F, F'$ with equal X-rays in $m+1$ directions, each set containing at most $m$ points. Without loss of generality there exists a point $p \in F\setminus F'$. Since $F$ and $F'$ have equal X-rays, there needs to be a point of $F'$ on each of the $m+1$ lines through $p$. This implies that $F'$ contains at least $m+1$ points, a contradiction.
\end{proof}

The bound is tight for $m\in\{1,2,3,4,6\}$ and $\mathbb{K}^d=\mathbb{Z}^2$; examples showing this for $m=1,2,3,4,6$ are respectively provided by any two $1$-point sets in $\mathbb{Z}^2$,  two-colorings of the unit cube in $\mathbb{Z}^2$, the sets $F=\{(0,0),(1,2),(2,1)\}$, $F'=\{(1,0),(0,1),(2,2)\}$, and the examples shown in Fig.~4.3 and Fig.~4.5 of~\cite{gardnergritzmannschapter4}. For the remaining cases, however, we can improve the bound as stated in the following result.
\begin{thm} \label{thm:lowerbound}
If $m=5$ or $m>6$ then $\psi_{\mathbb{Z}^2}(m)\geq m+1$.
\end{thm}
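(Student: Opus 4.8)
The plan is to rule out the borderline case $n=m$: combined with Theorem~\ref{thm:thm1}, showing that no two distinct $m$-point sets in $\mathbb{Z}^2$ can share their X-rays in $m$ directions yields $\psi_{\mathbb{Z}^2}(m)\geq m+1$. So suppose $F,F'\subset\mathbb{Z}^2$ are distinct, $|F|=|F'|=m$, with equal X-rays in pairwise non-parallel directions $s_1,\dots,s_m$. First I would sharpen the counting argument from the proof of Theorem~\ref{thm:thm1}. Picking $p\in F\setminus F'$, each of the $m$ distinct lines through $p$ in the directions $s_i$ must carry a point of $F'$; since these lines meet only in $p\notin F'$, the $F'$-points on them are distinct and, as $|F'|=m$, there is exactly one per line and they exhaust $F'$. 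The equal X-ray counts then force exactly one point of $F$ on each such line as well, namely $p$ itself, from which $F\cap F'=\emptyset$ follows. Applying this to every point of $F$ (and of $F'$) yields the rigid structure: in each direction $s_i$ the $m$ points of $F$ and the $m$ points of $F'$ lie on a common family of $m$ parallel lines, one point of each set per line, so that $s_i$ induces a perfect matching between $F$ and $F'$. Taking first moments transverse to two independent directions also shows that $F$ and $F'$ share a common centroid $O$.

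Next I would extract the geometry of this tight configuration. The two extreme supporting lines of $\mathrm{conv}(F\cup F')$ in each direction $s_i$ each carry exactly one point of $F$ and one of $F'$, and the one-point-per-line condition rules out points in the interior; this should establish that $F\cup F'$ is in convex position and that, read off in angular order about $O$, its $2m$ vertices alternate between $F$ and $F'$. In other words $F\cup F'$ is the vertex set of a $U$-polygon for $U=\{s_1,\dots,s_m\}$, with $F$ and $F'$ as its two colour classes.

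The crux is then to upgrade this combinatorial rigidity to an affine-rigidity statement: writing the vertices cyclically and centring at $O$, the matching conditions should yield a three-term linear recurrence $p_{j+1}+p_{j-1}=\lambda\,p_j$ with a single multiplier $\lambda$, so that $F$ (and likewise $F'$) is an affinely regular $m$-gon, exactly as in the real regular-polygon construction. This is the step I expect to be the main obstacle, and it is where a classification of $U$-polygons and affinely regular polygons in the spirit of Gardner--Gritzmann enters. Once it is in place, integrality of the coordinates forces $\lambda=2\cos(2\pi/m)$ to be the trace of an integer matrix carrying one lattice edge vector to the next, hence $\lambda\in\mathbb{Z}$; since $2\cos(2\pi/m)\in\mathbb{Z}$ only for $m\in\{1,2,3,4,6\}$ (the crystallographic restriction), the cases $m=5$ and $m>6$ are impossible, and this contradiction establishes $\psi_{\mathbb{Z}^2}(m)\geq m+1$.
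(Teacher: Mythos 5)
Your reduction is sound and is essentially the one the paper uses: after ruling out $n<m$ by Theorem~\ref{thm:thm1}, you show $F\cap F'=\emptyset$, that each direction induces a perfect matching between $F$ and $F'$ with exactly one point of each set per line, and that consequently the two supporting lines of $\mathrm{conv}(F\cup F')$ in each direction are edges, forcing all $2m$ points into convex position with alternating colours. This is precisely the paper's conclusion that $F\cup F'$ is the vertex set of a lattice $U$-gon for the $m$ given directions, and your version of it (including the disjointness argument and the centroid observation) is correct, if somewhat more elaborate than needed.

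The gap is in the final step, which is where the entire difficulty of the theorem sits. The paper disposes of the lattice $U$-gon by citing Theorem~4.5 of \cite{gardnergritzmann97} (and, for $m=5$, also Theorem~6 of \cite{alpers-tijdeman-07}): lattice $U$-gons do not exist for $|U|=5$ or $|U|>6$. That is a substantial theorem whose proof proceeds via the cross ratios of the slopes of the directions and a $p$-adic valuation argument --- not via affine regularity of the colour classes. Your proposed mechanism --- that ``the matching conditions'' yield a single three-term recurrence $p_{j+1}+p_{j-1}=\lambda p_j$ on the cyclically ordered vertices of $F$, whence $F$ is an affinely regular $m$-gon and the crystallographic restriction applies --- is asserted but not derived, and it is far from clear how to derive it: the $m$ directions induce $m$ unrelated permutations matching $F$ to $F'$, and none of them relates cyclically consecutive vertices of $F$ to one another, so there is no visible source for a recurrence with a single multiplier $\lambda$. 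You flag this yourself as ``the main obstacle'' and defer to ``a classification in the spirit of Gardner--Gritzmann,'' but the classification that actually exists in the literature is not the statement you need (affine regularity of $F$), so the proof is incomplete at exactly the load-bearing point. If you replace this step by a direct appeal to the non-existence of lattice $U$-gons for $m=5$ and $m>6$, your argument coincides with the paper's; as written, the crystallographic-restriction endgame rests on an unproven structural claim.
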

\begin{proof}
Let $m=5$ or $m>6$, and suppose there exist different $n$-point sets $F,F' \subseteq \mathbb{Z}^2$ with equal X-rays in $m\geq n$ directions. Without loss of generality we can assume that $F \cap F'=\emptyset$. The convex hull $P$ of $F\cup F'$ is a non-degenerate polygon with at most $2n$ vertices.  Parallel to each of the $m$ directions there are two lines that support $P$ with each line containing a single point from $F$ and $F'$, respectively (since otherwise one of $F$ and $F'$ contains more than $n$ points). Since this implies that $P$ has at least $2m$ edges, we conclude that at least $2m$ of the elements of $F\cup F'$ are vertices of $P$ (i.e, $n=m$), proving that $F\cup F'$ is the set of vertices of the non-degenerate convex $2m$-gon $P$. Since $F$ and $F'$ have the same X-rays, $P$ has the property that any line through a vertex of $P$ in any of the $m$ directions meets another vertex of~$P$. Such polygons are known as lattice $U$-gons with $U$ denoting the set of $m$ directions. They, however, do not exist for $m >6$ (see Thm.~4.5 in \cite{gardnergritzmann97}). As is shown in the proof of Thm.~4.5 of \cite{gardnergritzmann97} or (more simply) in Thm.~6 of \cite{alpers-tijdeman-07}, there are also no lattice $U$-gons for exactly $5$ directions. In other words, we have $\psi_{\mathbb{Z}^2}(m)> m$ for $m=5$ or $m>6$.
\end{proof}

The bound is tight for $m=5$.  For this consider the $6$-point sets \[F=\{(0,2),(1,4),(2,2),(3,0),(4,3),(5,1)\} \textnormal{ and } F'=\{(0,3),(1,1),(2,4),(3,2),(4,0),(5,2)\}.\] It is easily verified that $F$ and $F'$ have the same X-rays in the $5$ directions \[S=\{(1,0),(0,1),(1,1),(1,-1), (-2,1)\}.\] 

A reformulation of Thm.~\ref{thm:lowerbound} provides a strengthened version of R{\'e}nyi's theorem for $\mathbb{Z}^2$.
\begin{cor}\label{cor:cor1}
Any $n$-point set in $\mathbb{Z}^2$ with $n=5$ or $n>6$ is uniquely determined by its X-rays taken from at least~$n$ different directions.
\end{cor}

\section{Upper bounds}\label{sect:upperbounds}
In this section we prove a polynomial upper bound on $\psi_{\mathbb{K}^d}$. As a prelude, we prove an upper bound on the number of lines parallel to a given direction that intersect points of $G_n^d$. This is followed by a lemma that asserts the existence of certain coverings of a specified finite part of the integer lattice by $m$ families of parallel lines. 

\begin{lem}\label{lem:lemq1}
For any relatively prime $d$-tuple $s=(\sigma_1,\dots,\sigma_d)\in\mathbb{N}_0^d\setminus\{0\}$ with $d\geq2$ there are at most $dn^{d-1}\cdot\max\{\sigma_1,\dots,\sigma_d\}$ lines parallel to $s$ that intersect~$G_n^d$.
\end{lem}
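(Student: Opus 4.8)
The plan is to injectively label each line parallel to $s$ that meets $G_n^d$ by a single canonically chosen lattice point on it, and then to bound the number of admissible labels. First I would record a structural fact that uses the hypothesis that $s=(\sigma_1,\dots,\sigma_d)$ is relatively prime: the integer points on any line parallel to $s$ through a lattice point $p_0$ are exactly $\{p_0+ks:k\in\mathbb{Z}\}$. Indeed, $p_0+ts$ is integral precisely when $t\sigma_i\in\mathbb{Z}$ for every $i$; writing $t$ in lowest terms forces its denominator to divide $\gcd(\sigma_1,\dots,\sigma_d)=1$, so $t\in\mathbb{Z}$. Thus the lines parallel to $s$ are the cosets of $\mathbb{Z}s$, and each lattice point lies on exactly one of them.

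Next, since $s\in\mathbb{N}_0^d\setminus\{0\}$ has nonnegative, not-all-zero entries, along any such line the coordinates are weakly monotone in $k$ and at least one coordinate is strictly increasing. Hence the set of $k$ with $p_0+ks\in G_n^d$ is bounded below, and among the points of $\ell\cap G_n^d$ there is a unique one, call it $p$, of minimal parameter $k$; I would take this $p$ as the label of $\ell$. Distinct lines receive distinct labels because every lattice point lies on a unique line, so it suffices to count the possible labels.

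The crux is to constrain where a label $p$ can sit. By minimality, $p-s\notin G_n^d$. Because $p\in G_n^d$ and $s\ge 0$ entrywise, we have $p_i-\sigma_i\le p_i\le n$ for all $i$, so no coordinate of $p-s$ can exceed $n$; the failure of $p-s$ to lie in $G_n^d$ must therefore come from a coordinate dropping below $1$, i.e.\ there is an index $i$ with $p_i-\sigma_i<1$, equivalently $p_i\le\sigma_i$ (and in particular $\sigma_i\ge 1$). Every label thus lies in $\bigcup_{i=1}^d\{p\in G_n^d:p_i\le\sigma_i\}$.

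Finally I would count this union crudely: for each $i$ the slab $\{p\in G_n^d:1\le p_i\le\sigma_i\}$ contains at most $\sigma_i n^{d-1}$ points (and none when $\sigma_i=0$), so the number of lines is at most $\sum_{i=1}^d\sigma_i n^{d-1}\le d\,n^{d-1}\max\{\sigma_1,\dots,\sigma_d\}$, which is the claimed bound. I expect the only delicate points to be the bookkeeping around coordinates $\sigma_i=0$, which must contribute nothing and indeed cannot force a label, and the verification that the minimal point $p$ is genuinely well defined; both are controlled by the monotonicity and nonnegativity of $s$.
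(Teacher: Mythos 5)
Your proposal is correct and follows essentially the same route as the paper: both arguments injectively assign to each line its first lattice point $p$ in $G_n^d$ (the unique one with $p-s\notin G_n^d$), observe that such a point must lie in one of the slabs $\{p\in G_n^d : 1\le p_i\le\sigma_i\}$, and bound the union of these slabs by $dn^{d-1}\max\{\sigma_1,\dots,\sigma_d\}$. Your write-up merely supplies more of the routine justifications (relative primality giving $\ell\cap\mathbb{Z}^d=p_0+\mathbb{Z}s$, well-definedness of the minimal point, the $\sigma_i=0$ case) that the paper leaves implicit.
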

\begin{proof}
For each line $\ell$ parallel to $s=(\sigma_1,\dots,\sigma_d)$ that intersects $G_n^d$, there is a unique point $p \in \ell\cap G_n^d$ for which $p-s \not\in G_n^d$. The point $p-s$ needs to have a non-positive component, i.e., 
\[p\in V_i=\{(\xi_1,\dots,\xi_d)\in G_n^d\::\: 1\leq \xi_i\leq \sigma_i \}\] for an $i \in [d]$. As the number of points in $\bigcup_{i=1}^dV_i$ is clearly bounded by $dn^{d-1}\cdot\max\{\sigma_1,\dots,\sigma_d\}$, we obtain the claimed result. (Tight bounds can be obtained similary via the inclusion-exclusion principle, but they are not needed in the present context.)
\end{proof}


\begin{lem} \label{lem:lemT1}
Let $\varepsilon>0$, $m \in \mathbb{N}$, $d\geq 2$, and $n\in\left\{\left\lceil m^{1+(1+\varepsilon)/d}\right\rceil,\left\lceil m^{1+(1+\varepsilon)/d}\right\rceil+1\right\}$. Then, for large $m$ there is a set $S=\{s_1,\dots,s_m\}\subseteq \mathbb{Z}^d$ with the property that
\begin{enumerate}[(i)]
\item the elements of $S$ are pairwise linearly independent spanning $\mathbb{R}^d$;
\item the total number $l$ of lines that are parallel to a direction in $S$ and intersect $G_{n}^d$ is bounded from above by 
$2^{1+1/d} d {n}^{d-1} m^{1+1/d}.$
\end{enumerate}
\end{lem}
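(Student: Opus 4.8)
The plan is to choose the $m$ directions to be primitive integer vectors lying in the nonnegative orthant with small max-norm, so that Lemma~\ref{lem:lemq1} applies to each and the resulting bounds can simply be summed. Set $k=\lfloor 2^{1+1/d}m^{1/d}\rfloor$. First I would record the elementary fact that two distinct primitive vectors in $\mathbb{N}_0^d$ are never parallel: if $u=\lambda v$ with $u,v\in\mathbb{N}_0^d$ primitive, then $\lambda>0$, and writing $\lambda=p/q$ in lowest terms and comparing the component-gcd of $qu=pv$ forces $p=q$, hence $u=v$. Thus taking $S$ to consist of distinct primitive vectors in $\mathbb{N}_0^d$ automatically secures the pairwise linear independence required in (i), while including the standard basis vectors $e_1,\dots,e_d$ (which are primitive and have max-norm $1$) guarantees that $S$ spans $\mathbb{R}^d$.

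The key counting step is to verify that there are at least $m$ primitive vectors in $\mathbb{N}_0^d$ with max-norm at most $k$. Here I would invoke the classical fact that the number of primitive lattice points in $\{1,\dots,k\}^d$ is $\frac{k^d}{\zeta(d)}\bigl(1+o(1)\bigr)$ as $k\to\infty$ (equivalently, the probability that $d$ integers in $[k]$ are coprime tends to $1/\zeta(d)$). With $k\approx 2^{1+1/d}m^{1/d}$ this count is asymptotically $\frac{2^{d+1}}{\zeta(d)}m$, and since $\zeta(d)<2$ for every $d\geq2$ it exceeds $m$ for large $m$ by a wide margin; the $o(1)$ error term and the points with a vanishing coordinate are comfortably absorbed by this slack. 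Hence, for large $m$, I can take $S=\{e_1,\dots,e_d\}$ together with $m-d$ further distinct primitive vectors from $\{1,\dots,k\}^d$, obtaining an $m$-element set $S\subseteq\mathbb{N}_0^d\subseteq\mathbb{Z}^d$ satisfying (i), every element of which has max-norm at most $k$.

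It then remains only to bound the line count. For each $s=(\sigma_1,\dots,\sigma_d)\in S$, Lemma~\ref{lem:lemq1} gives at most $d n^{d-1}\max_i\sigma_i\leq d n^{d-1}k$ lines parallel to $s$ that meet $G_n^d$. Summing over the $m$ directions yields
\[
l\leq m\cdot d n^{d-1}k\leq d n^{d-1}m\cdot 2^{1+1/d}m^{1/d}=2^{1+1/d}d n^{d-1}m^{1+1/d},
\]
which is exactly the bound claimed in (ii). Note that the specific value of $n$ plays no role in the selection of directions and enters only through the factor $n^{d-1}$ inherited from Lemma~\ref{lem:lemq1}.

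The hard part will be the counting step: one must pin down the asymptotic density $1/\zeta(d)$ of primitive lattice points in a box and confirm that the generous target constant $2^{1+1/d}$ leaves enough room to swallow the lower-order boundary contributions and the $o(1)$ term. Everything else reduces to direct substitution into Lemma~\ref{lem:lemq1} together with the elementary observations about primitivity and spanning made above.
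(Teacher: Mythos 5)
Your proposal is correct and follows essentially the same route as the paper: both select the $m$ directions as relatively prime tuples from a box of side $O(m^{1/d})$, use Nymann's density $1/\zeta(d)$ together with $\zeta(d)<2$ to guarantee enough such tuples for large $m$, and then sum the bound of Lemma~\ref{lem:lemq1} over the $m$ directions. The only differences are cosmetic (a slightly larger box, explicitly adjoining the standard basis vectors rather than swapping them in, and spelling out why distinct primitive vectors in the nonnegative orthant are pairwise linearly independent).
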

\begin{proof}
For the number $R(p,d)$ of relatively prime $d$-tuples in $G_p^d$, $p\in\mathbb{N}$, it holds by \cite{nymann72} that \[
\lim_{p \to \infty}\frac{R(p,d)}{p^d}=\frac{1}{\zeta(d)}.
\] As $\zeta$ decreases for values larger than $1$ and since $\zeta(2)=\pi^2/6<2$, we have \[ R(p,d)> p^d/2 \] for large $p$.

Setting $q=\left\lceil (2m)^{1/d}\right\rceil$, we note that $q \leq 2(2m)^{1/d}$ and $q \leq n$ for $m\geq2$. For large $m$ we have  
\[ R(q,d)> q^d/2\geq m, \] so for our set $S$ we can select $m$ elements from $G_{q}^d\subseteq G_{n}^d$. We can assume that the elements of $S$ span~$\mathbb{R}^d$ since otherwise we replace~$d$ of the directions by the standard unit vectors. Property~(i) is thus fulfilled (note that the elements of $S$ are relatively prime $d$-tuples).

The entries of the elements in $S$ are bounded by $q$, so by Lem.~\ref{lem:lemq1} we have at most
\[
mdn^{d-1}q\leq 2^{1+1/d} d {n}^{d-1} m^{1+1/d}
\]
lines parallel to a direction in $S$ that intersect $G_{n}^d$.
\end{proof}

\begin{thm} \label{thm:maintheorem}
For every $\varepsilon>0$ and $d\geq2$ it holds that $\psi_{\mathbb{Z}^d}(m)=O(m^{d+1+\varepsilon})$.
\end{thm}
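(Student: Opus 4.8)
The plan is to run a counting (pigeonhole) argument, which accounts for the non-constructive nature of the result promised in the introduction. First I would fix $\varepsilon>0$ and $d\geq2$, set $n=\lceil m^{1+(1+\varepsilon)/d}\rceil$, and invoke Lem.~\ref{lem:lemT1} to obtain, for large $m$, a set $S=\{s_1,\dots,s_m\}$ of pairwise linearly independent directions spanning $\mathbb{R}^d$ (so the admissibility requirement in the definition of $\psi_{\mathbb{Z}^d}$ is met) for which the total number $l$ of lines parallel to some $s_i$ and meeting $G_n^d$ satisfies $l\leq 2^{1+1/d}d\,n^{d-1}m^{1+1/d}$. The arithmetic heart of the argument is that the grid $G_n^d$ carries $n^d$ points and that $n^d$ outgrows $l$ polynomially: an exponent computation gives $n^d=\Theta(m^{d+1+\varepsilon})$ while $l=O(m^{d+1+(d-1)\varepsilon/d})$, so that $n^d/l=\Omega(m^{\varepsilon/d})\to\infty$.

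Next I would count X-rays directly. Each subset $F\subseteq G_n^d$ determines an X-ray vector recording, for each of the $l$ lines, the number of points of $F$ on that line; since a line parallel to a nonzero direction with nonnegative entries meets $G_n^d$ in at most $n$ points, every coordinate of this vector lies in $\{0,1,\dots,n\}$. Hence there are at most $(n+1)^l$ distinct X-ray vectors, whereas there are $2^{n^d}$ subsets. The desired strict inequality $2^{n^d}>(n+1)^l$ is equivalent to $n^d>l\log_2(n+1)$, and this holds for large $m$ because $n^d/l=\Omega(m^{\varepsilon/d})$ dominates $\log_2(n+1)=O(\log m)$.

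By the pigeonhole principle there are then two distinct subsets $F_1\neq F_2$ of $G_n^d$ with identical X-rays in the directions of $S$. Setting $F=F_1\setminus F_2$ and $F'=F_2\setminus F_1$ produces disjoint sets with equal X-rays; since equal X-rays force $|F_1|=|F_2|$, both $F$ and $F'$ are nonempty and $|F|=|F'|=:N$ with $0<N\leq n^d$. Thus $S$ witnesses two tomographically equivalent $N$-point sets in $\mathbb{Z}^d$, and $\psi_{\mathbb{Z}^d}(m)\leq N\leq n^d=O(m^{d+1+\varepsilon})$, as required.

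The main obstacle, and the reason for the precise tuning of $n$ in Lem.~\ref{lem:lemT1}, is securing $n^d>l\log_2(n+1)$. The line count $l$ scales like $n^{d-1}m^{1+1/d}$, essentially one factor of $n$ smaller than the point count because each X-ray collapses a one-parameter family of grid points, so the side length $n\approx m^{1+(1+\varepsilon)/d}$ must be chosen exactly to open a polynomial gap $m^{\varepsilon/d}$ between points and lines that survives the logarithmic loss inherent in encoding a set by its X-ray vector. Checking the exponent bookkeeping and that $m^{\varepsilon/d}$ overtakes $\log m$ is routine, but it is precisely this balance that pins down the exponent $d+1+\varepsilon$.
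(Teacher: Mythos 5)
Your proposal is correct, and it follows the paper's overall strategy: the same choice of $n\approx m^{1+(1+\varepsilon)/d}$, the same reliance on Lem.~\ref{lem:lemT1} to control the total number $l$ of relevant lines, and the same pigeonhole comparison of subsets of $G_n^d$ against their X-ray data. Where you differ is in the bookkeeping of the pigeonhole. The paper fixes the cardinality $k=n^d/2$ (forcing $n$ to be chosen even, hence the two-element menu for $n$ in Lem.~\ref{lem:lemT1}), counts $k$-subsets via $\binom{n^d}{n^d/2}\geq 2^{n^d/2}$, and bounds the number of possible X-rays of a $k$-subset by products of weak-composition counts $\binom{k+l_i-1}{l_i-1}$, which it then estimates by $n^{2l}$ using a standard binomial inequality and the observation $l_i\geq n^{d-1}$. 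You instead compare all $2^{n^d}$ subsets against the crude bound $(n+1)^l$ on X-ray vectors (each line carries at most $n$ points of $G_n^d$, since every direction has a positive coordinate), which is both a larger pigeon count and a smaller hole count, and reduces the whole estimate to the transparent inequality $n^d>l\log_2(n+1)$; your exponent computation $n^d/l=\Omega(m^{\varepsilon/d})$ versus $\log_2(n+1)=O(\log m)$ is right. The price is that your two colliding subsets need not have a prescribed size, so you must pass to the symmetric difference $F=F_1\setminus F_2$, $F'=F_2\setminus F_1$ and argue (correctly) that equal X-rays force $|F_1|=|F_2|$, hence $F,F'$ are nonempty, disjoint, equinumerous, and still tomographically equivalent. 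Your route is slightly more elementary (no composition or binomial estimates, no parity adjustment of $n$); the paper's route delivers the two sets with an explicit common cardinality $n^d/2$ directly. Both yield $\psi_{\mathbb{Z}^d}(m)=O(m^{d+1+\varepsilon})$.
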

\begin{proof}
We assume that $m$ is large enough that the set $S$ from Lem.~\ref{lem:lemT1} exists. We set \[n=\left\{\left\lceil m^{1+(1+\varepsilon)/d}\right\rceil,\left\lceil m^{1+(1+\varepsilon)/d}\right\rceil+1\right\}\cap 2\mathbb{Z}\] and $k=\frac{1}{2}n^d$. Note that $k \in \mathbb{N}$,  $k=O(m^{d+1+\varepsilon})$, and that we can assume that $n \geq 4$.

Let $l_i$, $i\in[m]$, denote the number of lines parallel to $s_i$ that intersect $G_n^d$. 
The X-ray in direction $s_i$ of a set in $G_{n}^d$ with cardinality $k$ gives a \emph{weak $k$-composition of $l_i$}, i.e., a  solution to $\xi_1+\cdots+\xi_{l_i}=k$ in nonnegative integers \cite[p.~15]{stanley}. (The converse is generally false, because the corresponding X-ray lines may intersect~$G_{n}^d$ in fewer points than provided by a weak $k$-composition of $l_i$.) 
The number of weak $k$-compositions of $l_i$ is given by \[N(k,l_i)= {k+l_i-1 \choose l_i-1}\] and thus represents an upper bound for the number of different X-rays of $k$-point subsets of $G_n^d$ in the direction $s_i$.

With $l=l_1+\cdots+l_m$ we thus obtain the following upper bound on the number of different X-rays (for the directions in $S$) that can originate from a subset of $G_{n}^d$ with cardinality $k$:
\[
 \prod_{i=1}^mN(k,l_i)\leq\prod_{i=1}^m{n^d/2+l_i\choose l_i} \leq \prod_{i=1}^m\left(\frac{(n^d/2+l_i)e}{l_i}\right)^{l_i}
=\prod_{i=1}^m\left( \frac{n^de}{2l_i}+e\right)^{l_i} \leq (ne+e)^{l}\leq n^{2l};
\]
here the inequalities (from left to right) follow from $N(k,l_i)\leq N(k,l_i+1)$, a standard inequality for binomial coefficients (see, e.g., \cite[Eq.~(4.9)]{odlyzko}), $l_i \geq n^{d-1}$, and  $n\geq4$, respectively.

There are 
\[
{n^d\choose n^d/2} \geq 2^{n^d/2}\label{eq:FF1}
\]
subsets of cardinality $k$ in $G_{n}^d$. We claim that 
\[
n^{2l} < 2^{n^d/2}
\] holds for large $m$, which, by the pigeonhole principle, concludes the proof as it implies the existence of two sets in $G_{n}^d$ with cardinality $k$ and equal X-rays in the directions in $S$.

For the claim we first note that 
\begin{equation}m^{1+(1+\varepsilon)/d} \leq n\leq 3m^{1+(1+\varepsilon)/d}   \label{eq:eqww1} \end{equation}
holds as $m^{1+(1+\varepsilon)/d}\geq 1$. It is easy to see that $\lim_{x \to \infty}x^{a}/2^{x^{b}}=0$ for $a,b>0$.
Thus for large~$m$ and $C=2^{3+1/d}d$
we have
\[ 3^{C}m^{C(1+(1+\varepsilon)/d)}<2^{m^{\varepsilon/d}},\] which, by~(\ref{eq:eqww1}) and Property~(ii) of Lem.~\ref{lem:lemT1}, gives
\[ n^{C}< 2^{m^{\varepsilon/d}}
\Rightarrow  n^{Cm^{1+1/d}}<2^n
\Rightarrow  n^{Cn^{d-1}m^{1+1/d}}<2^{n^d}
\Rightarrow  n^{4l}<2^{n^d},
\]
proving the claim.
\end{proof}


\section{Remarks and consequences}\label{sect:consequences}
The previously mentioned regular $2m$-gon construction in $\mathbb{R}^2$, together with the inequality $\psi_{\mathbb{R}^d}(m)\leq \psi_{\mathbb{Z}^d}(m)$ for $d\geq 2$, yields the following corollary to Thm.~\ref{thm:maintheorem}.
\begin{cor}
 For every $\varepsilon>0$ and $d\in\mathbb{N}$, it holds that 
 \[ \psi_{\mathbb{R}^d}(m)=\left\{ \begin{array}{lll} m &\textnormal{if}& d=2,\\ O(m^{d+1+\varepsilon}) &\textnormal{if}& d>2.\end{array}\right.\]
\end{cor}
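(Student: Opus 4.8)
The plan is to assemble the statement from results already established, treating the planar case $d=2$ and the higher-dimensional case $d>2$ separately. The two ingredients beyond Theorems~\ref{thm:thm1} and~\ref{thm:maintheorem} are the regular $2m$-gon construction recalled in the introduction and the transfer inequality $\psi_{\mathbb{R}^d}(m)\leq\psi_{\mathbb{Z}^d}(m)$ for $d\geq2$. I would first record why the latter holds: since $\mathbb{Z}^d\subseteq\mathbb{R}^d$, any two tomographically equivalent integer point sets together with an admissible family of directions form a valid witnessing configuration in $\mathbb{R}^d$ as well, so the minimal cardinality defining $\psi_{\mathbb{R}^d}(m)$ is at most the one defining $\psi_{\mathbb{Z}^d}(m)$.

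For $d=2$ I would establish the two matching bounds. The lower bound $\psi_{\mathbb{R}^2}(m)\geq m$ is exactly the case $d=2$, $\mathbb{K}=\mathbb{R}$ of Theorem~\ref{thm:thm1}. For the upper bound $\psi_{\mathbb{R}^2}(m)\leq m$ I would invoke the regular $2m$-gon construction from the introduction: the two disjoint $m$-point sets of alternate vertices of a regular $2m$-gon have equal X-rays in the $m$ edge directions of the polygon. The only point that merits a short verification is that these $m$ directions meet the admissibility conditions in the definition of $\psi$, namely that they are pairwise linearly independent and span $\mathbb{R}^2$; in the plane this is immediate, since the $m$ edge directions of a regular $2m$-gon are pairwise non-parallel and any two of them already span $\mathbb{R}^2$. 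Combining the two inequalities gives $\psi_{\mathbb{R}^2}(m)=m$.

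For $d>2$ the argument is a one-line transfer: combining $\psi_{\mathbb{R}^d}(m)\leq\psi_{\mathbb{Z}^d}(m)$ with Theorem~\ref{thm:maintheorem} yields $\psi_{\mathbb{R}^d}(m)\leq\psi_{\mathbb{Z}^d}(m)=O(m^{d+1+\varepsilon})$ for every $\varepsilon>0$, which is the claimed bound.

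I do not anticipate a genuine obstacle here, as all of the substantial work has been carried out in the earlier sections; the corollary is essentially a packaging of Theorems~\ref{thm:thm1} and~\ref{thm:maintheorem} together with the $2m$-gon construction. The only care required is bookkeeping: verifying the admissibility of the edge directions in the planar case, and reading the statement for $d\geq2$, since the definition of $\psi_{\mathbb{R}^d}$ is degenerate for $d=1$ (one cannot choose even two pairwise linearly independent directions spanning $\mathbb{R}^1$).
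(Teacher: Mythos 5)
Your proposal is correct and matches the paper's (very brief) justification exactly: the $d=2$ case combines the lower bound of Thm.~\ref{thm:thm1} with the regular $2m$-gon construction, and the $d>2$ case follows from $\psi_{\mathbb{R}^d}(m)\leq\psi_{\mathbb{Z}^d}(m)$ together with Thm.~\ref{thm:maintheorem}. Your extra remarks on the admissibility of the edge directions and the degeneracy at $d=1$ are sensible bookkeeping that the paper leaves implicit.
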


In \cite{alpers-tijdeman-07} the  \emph{general Prouhet-Tarry-Escott problem} (PTE$_r$) was introduced: Given $k$, $n$, $r \in \mathbb{N}$, find two different multi-sets $\{x_1,\dots,x_n\}\!$, $\{y_1,\dots,y_n \}\subseteq\mathbb{Z}^r$ 
where $x_i=(\xi_{i1}, \dots, \xi_{ir})$, $y_i=(\eta_{i1}, \dots, \eta_{ir})$ for $i\in[n]$  such that
$$  \sum_{i=1}^n \xi_{i1}^{j_1}\xi_{i2}^{j_2} \cdots \xi_{ir}^{j_r}=\sum_{i=1}^n \eta_{i1}^{j_1}\eta_{i2}^{j_2} \cdots \eta_{ir}^{j_r} $$
for all nonnegative integers  $j_1, \dots, j_r$ with $j_1+\cdots+ j_r \leq k.$ The parameter $k$ is called the \emph{degree} and~$n$ the \emph{size} of the solution. 
Tracing back to works of Euler and Goldbach \cite[p.~705]{dickson}, the Prouhet-Tarry-Escott problem (PTE$_1$) is an old and largely unsolved problem in Diophantine analysis. The following corollary sharpens the bound of \cite[Thm.~12]{alpers-tijdeman-07} on the size of solutions, which for (PTE$_1$) is due to Prouhet~\cite{prouhet51}.

\begin{cor} \label{cor:cor2}
For every $\varepsilon>0$ there exists a constant $C>0$ such that there are solutions of (PTE$_2$) of degree $k$ and size bounded by $Ck^{3+\varepsilon}$.
\end{cor}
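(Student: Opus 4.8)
The plan is to reduce the existence of a small (PTE$_2$) solution of degree $k$ to the existence of a small X-ray switching component for $m=k+1$ directions, and then to invoke Thm.~\ref{thm:maintheorem} in the case $d=2$. Concretely, I would associate to a finite point multiset $F\subseteq\mathbb{Z}^2$ the (Laurent) polynomial $P_F(X,Y)=\sum_{(a,b)\in F}X^aY^b$ and work with the difference $D=P_F-P_{F'}$ of two such multisets.

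First I would record the algebraic meaning of equal X-rays. For a relatively prime direction $s_i=(p_i,q_i)$ the quantity $q_ia-p_ib$ is constant on each line parallel to $s_i$, so the substitution $X\mapsto t^{q_i}$, $Y\mapsto t^{-p_i}$ collapses each such line to a single monomial weighted by the number of points on it. Hence $F$ and $F'$ have equal X-rays in direction $s_i$ if and only if this substitution annihilates $D$, which (as $\gcd(p_i,q_i)=1$) is equivalent to $(X^{p_i}Y^{q_i}-1)\mid D$. For pairwise linearly independent directions these factors are pairwise coprime, so equal X-rays in all $m$ directions of $S$ is equivalent to
\[
\prod_{i=1}^m\bigl(X^{p_i}Y^{q_i}-1\bigr)\ \Big|\ D.
\]

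Next I would pass to moments by the substitution $X=e^u$, $Y=e^w$. Each factor becomes $e^{p_iu+q_iw}-1$, whose Taylor expansion at the origin begins with the nonzero linear form $p_iu+q_iw$; therefore the product vanishes to order at least $m$ there, and so does $D(e^u,e^w)$. Since the coefficient of $u^{j_1}w^{j_2}/(j_1!\,j_2!)$ in $D(e^u,e^w)$ equals $\sum_{(a,b)\in F}a^{j_1}b^{j_2}-\sum_{(a,b)\in F'}a^{j_1}b^{j_2}$, this vanishing says exactly that all mixed power sums of $F$ and $F'$ of total degree at most $m-1$ coincide. In other words, a switching component for $m$ directions whose two sets have $N$ points yields a (PTE$_2$) solution of degree $k=m-1$ and size $N$; this is the reduction already implicit in \cite{alpers-tijdeman-07}. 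Applying Thm.~\ref{thm:maintheorem} with $d=2$ to $m=k+1$ produces such a switching component with $N=O(m^{3+\varepsilon})=O((k+1)^{3+\varepsilon})=O(k^{3+\varepsilon})$ points, and the finitely many small values of $k$ are absorbed into the constant $C$.

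The main obstacle is the reduction itself rather than the final estimate: one must argue cleanly that divisibility of $D$ by the $m$ direction-factors forces the joint moments up to degree $m-1$ (not merely the one-dimensional marginals) to agree. The order-of-vanishing computation under $X=e^u$, $Y=e^w$ is what upgrades the $m$ separate X-ray conditions into the full set of bivariate power-sum identities defining (PTE$_2$); verifying that the product of the linear forms $p_iu+q_iw$ is a nonzero homogeneous polynomial of degree $m$ (so that the order of vanishing is indeed at least $m$) is the one point needing care, and it follows since each $(p_i,q_i)\neq 0$ and the polynomial ring is an integral domain.
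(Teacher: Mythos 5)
Your proof is correct and follows essentially the same route as the paper: the paper's own proof simply cites Thm.~8 of \cite{alpers-tijdeman-07} for the reduction (tomographically equivalent sets in $\mathbb{Z}^2$ for $m$ pairwise linearly independent directions yield (PTE$_2$) solutions of degree $m-1$) and combines it with Thm.~\ref{thm:maintheorem} for $d=2$, exactly as you do with $m=k+1$. The only difference is that you additionally supply a correct proof of that reduction via the generating-polynomial divisibility and order-of-vanishing argument, which is the argument underlying the cited result rather than a new approach.
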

\begin{proof}
In Thm.~8 of \cite{alpers-tijdeman-07} it was shown that tomographically equivalent sets in $\mathbb{Z}^2$ for $m$ directions yield (PTE$_2$) solutions of degree $m-1$. This and Thm.~\ref{thm:maintheorem} for $d=2$ imply the statement of this corollary.
\end{proof}

\begin{rema}
As the products cancel, it is evident that solutions of (PTE$_1$) can be obtained by applying to (PTE$_2$) solutions a suitable linear functional that maps $(\xi_1,\xi_2)\in\mathbb{Z}^2$ to $\alpha_1\xi_1+\alpha_2\xi_2$
where $\alpha_1,\alpha_2 \in \mathbb{Z}$ are suitably chosen. The current best bounds for (PTE$_1$) are quadratic in~$k$ (see \cite{mezak61}, \cite{wright35}); the bound from Thm.~\ref{thm:maintheorem} is in this case weaker.
\end{rema}


\begin{thebibliography}{10}

\bibitem{alpers-tijdeman-07}
A.~Alpers and R.~Tijdeman.
\newblock The two-dimensional {P}rouhet-{T}arry-{E}scott problem.
\newblock {\em J. Number Theory}, 123:403--412, 2007.

\bibitem{bianchilonginetti}
G.~Bianchi and M.~Longinetti.
\newblock Reconstructing plane sets from projections.
\newblock {\em Discrete Comp. Geom.}, 5:223--242, 1990.

\bibitem{chrestenson63}
H.E. Chrestenson.
\newblock Solution to problem 5014.
\newblock {\em Amer. Math. Monthly}, 70:447--448, 1963.

\bibitem{dickson}
L.~Dickson.
\newblock {\em History of the Theory of Numbers}, volume~2.
\newblock Carnegie Institute of Washington, Washington, 1920.

\bibitem{gardner}
R.~Gardner.
\newblock {\em Geometric Tomography}.
\newblock Cambridge University Press, Cambridge, 2nd edition, 2006.

\bibitem{gardnergritzmann97}
R.J. Gardner and P.~Gritzmann.
\newblock Discrete tomography: Determination of finite sets by {X}-rays.
\newblock {\em Trans. Amer. Math. Soc.}, 349:2271--2295, 1997.

\bibitem{gardnergritzmannschapter4}
R.J. Gardner and P.~Gritzmann.
\newblock Uniqueness and complexity in discrete tomography.
\newblock In G.T. Herman and A.~Kuba, editors, {\em Discrete Tomography}.
  Birkh\"auser Boston Inc., Boston, 1999.

\bibitem{gritzmannlangfeld}
P.~Gritzmann, B.~Langfeld, and M.~Wiegelmann.
\newblock Uniqueness in discrete tomography: Three remarks and a corollary.
\newblock {\em SIAM J. Discrete Math.}, 25:1589--1599, 2011.

\bibitem{hajdutijdeman01}
L.~Hajdu and R.~Tijdeman.
\newblock Algebraic aspects of discrete tomography.
\newblock {\em J. {R}eine {A}ngew. {M}ath.}, 534:119--128, 2001.

\bibitem{hardywright08}
G.H. Hardy and E.M. Wright.
\newblock {\em An Introduction to the Theory of Numbers}.
\newblock Oxford University Press, Oxford, 6th edition, 2008.

\bibitem{heppes}
A.~Heppes.
\newblock On the determination of probability distributions of more dimensions
  by their projections.
\newblock {\em Acta Math. Acad. Sci. Hung.}, 7:403--410, 1956.

\bibitem{herman09}
G.T. Herman.
\newblock {\em Fundamentals of Computerized Tomography}.
\newblock Springer, Dordrecht, 2nd edition, 2009.

\bibitem{kongherman98}
T.Y. Kong and G.T. Herman.
\newblock On which grids can tomographic equivalence of binary pictures be
  characterized in terms of elementary switching operations?
\newblock {\em Int. J. Imaging Syst. Technol.}, 9:118--125, 1998.

\bibitem{lorentz49}
G.G. Lorentz.
\newblock A problem of plane measure.
\newblock {\em Amer. J. Math.}, 71:417--426, 1949.

\bibitem{matousek08}
J.~Matou{\v{s}}ek, A.~P{\v{r}}{\'{\i}}v{\v{e}}tiv{\'y}, and
  P.~{\v{S}}kovro{\v{n}}.
\newblock How many points can be reconstructed from k projections?
\newblock {\em SIAM J. Discrete Math.}, 22:1605--1623, 2008.

\bibitem{mezak61}
Z.A. Melzak.
\newblock A note on the {T}arry-{E}scott problem.
\newblock {\em Canad. Math. Bull.}, 4:233--237, 1961.

\bibitem{nymann72}
J.E. Nymann.
\newblock On the probability that k positive integers are relatively prime.
\newblock {\em J. Number Theor.}, 4:469--473, 1972.

\bibitem{odlyzko}
A.M. Odlyzko.
\newblock Asymptotic enumeration methods.
\newblock In {\em Handbook of Combinatorics}, volume~2, pages 1063--1229.
  Elsevier, Amsterdam, 1995.

\bibitem{prouhet51}
M.E. Prouhet.
\newblock M{\'e}moire sur quelques relations entre les puissances des nombres.
\newblock {\em C. R. Math. Acad. Sci. Paris 33}, 225:233--237, 1851.

\bibitem{renyi52}
A.~R{\'e}nyi.
\newblock On projections of probability distributions.
\newblock {\em Acta Math. Acad. Sci. Hungar.}, 3:131--142, 1952.

\bibitem{scherrer}
W.~Scherrer.
\newblock Die {E}inlagerung eines regul\"aren {V}ielecks in ein {G}itter.
\newblock {\em Elemente der Math.}, 1:97--98, 1946.

\bibitem{shilferstein}
A.~Shliferstein and Y.T. Chien.
\newblock Switching components and the ambiguity problem in the reconstruction
  of pictures from their projections.
\newblock {\em Pattern Recogn.}, 10:327--340, 1978.

\bibitem{stanley}
R.P. Stanley.
\newblock {\em Enumerative Combinatorics}, volume~1.
\newblock Cambridge University Press, Cambridge, 1997.

\bibitem{wright35}
E.M. Wright.
\newblock On {T}arry's problem ({I}).
\newblock {\em Quart. J. Math.}, 6:261--267, 1935.

\end{thebibliography}
\end{document}